\theoremstyle{plain}
\newtheorem{theorem}{Theorem}
\newtheorem{lemma}{Lemma}
\newtheorem{proposition}{Proposition}
\newtheorem{remark}{Remark}
\theoremstyle{definition}
\newtheorem{algorithm}{Algorithm}
\setlist[enumerate]{label=$\left(\roman*\right)$, wide=0pt}
\newcommand{\demi}{\frac{1}{2}}
\newcommand{\optval}{f_{\star}}
\newcommand{\bR}{\mathbb{R}}
\newcommand{\cE}{\mathcal{E}}
\newcommand{\cH}{\mathcal{H}}
\newcommand{\argmin}[2]{\mathrm{argmin}_{#1} #2}
\newcommand{\prox}[2]{\mathsf{prox}_{#1} \left( #2 \right)}
\newcommand{\func}[2]{#1 \left( #2 \right)}
\newcommand{\abs}[1]{\left\lvert #1 \right\rvert}
\newcommand{\innp}[2]{\left\langle #1 , #2 \right\rangle}
\newcommand{\norm}[1]{\left\lVert #1 \right\rVert}
\date{}
\title{The Iterates of Nesterov's Accelerated Algorithm Converge in The Critical Regimes}
\author{Radu Ioan Bo\c{t}\thanks{Faculty of Mathematics, University of Vienna, Oskar-Morgenstern-Platz 1, 1090 Vienna, Austria, e-mail: \url{radu.bot@univie.ac.at}.}  \and Jalal Fadili \thanks{Normandie Universit\'e, ENSICAEN, CNRS, GREYC, France, e-mail:\url{Jalal.Fadili@ensicaen.fr}.} \and Dang-Khoa Nguyen\thanks{Faculty of Mathematics and Computer Science, University of Science, Ho Chi Minh City, Vietnam, e-mail: \url{ndkhoa@hcmus.edu.vn}.} \thanks{Vietnam National University, Ho Chi Minh City, Vietnam.}}
\begin{document}
\maketitle

\begin{abstract}
In this paper, we prove that the iterates of the accelerated Nesterov's algorithm in the critical regime do converge in the weak topology to a global minimizer of an $L$-smooth function in a real Hilbert space, hence answering positively a conjecture posed by H. Attouch and co-authors a decade ago. This result is the algorithmic case of a very recent result on the continuous-time system posted by E. Ryu on X, with assistance from ChatGPT.
\end{abstract}

\section{Introduction}
\subsection{Problem statement}
Throughout the paper, $\cH$ is a real Hilbert space which is endowed with the scalar product $\innp{\cdot}{\cdot}$ and associated norm $\norm{\cdot}$. We consider the optimization problem
\begin{equation}\label{eq:minP}
\min_{x \in \cH} f(x) ,
\end{equation}
where $f : \cH \to \bR$ is a convex and $L_{\nabla f}$-smooth function, i.e. continuously differentiable with $L_{\nabla f}$-Lipschitz gradient. We denote $\argmin{\cH}{f}$ the solution set of \eqref{eq:minP}, which is assumed to be nonempty, and $\optval := \inf_{x \in \cH} f(x) \in \bR$ the optimal value. 

Since the seminal work of Nesterov \cite{Nesterov} (and even before by Gelfand and Tseslin \cite{GT}), accelerated dynamics based on asymptotically vanishing damping, both in continuous-time and their discrete counterparts, have attracted a tremendous interest in the optimization and machine learning literature. For instance, consider the second-order differential equation
\begin{equation}\label{eq:AVD}
\ddot{x}(t) + \frac{\alpha}{t} \dot{x}(t)  +  \nabla f (x(t)) = 0,
\end{equation}
where $\alpha$ is a positive parameter. As a specific feature, the viscous damping coefficient $\frac{\alpha}{t}$ vanishes (tends to zero) as time $t$ goes to infinity, hence the terminology. 

For $\alpha=3$, this system was introduced in \cite{SBC}. There, it was also shown to be a continuous version of the accelerated gradient method of Nesterov \cite{Nesterov}. Its adaptation to the case of structured ``smooth + nonsmooth'' convex optimization gives the Fast Iterative Shrinkage-Thresholding Algorithm (FISTA) of \cite{BT:09}. The convergence properties of the dynamic \eqref{eq:AVD} have been the subject of a large body of work; see e.g., \cite{AAD,AC1,AC:18,AC2R-EECT,ACPR,ACR-subcrit,AP,AD,AD17,May,SBC}. In particular, for $\alpha \geq 3$, the generated trajectory $x(t)$ of \eqref{eq:AVD} satisfies the asymptotic rate of convergence of the values $f(x(t)) - f_* =O\left(1 /t^2\right)$ as $t \to +\infty$. For $\alpha > 3$, it was shown in \cite{AP} and \cite{May}, that the trajectory converges weakly to a minimizer of $f$, with the improved rate of convergence for the function values along the trajectory convergence of $o(1/t^2)$ as $t \to +\infty$. 

Corresponding results for the algorithmic case have been obtained by \cite{CD:15} and \cite{AP}. 
Recently, it was shown in \cite{ABHN} that there is some connection between \eqref{eq:AVD} and the Heavy-ball method of Polyak \cite{Polyak1964}.

For the critical case $\alpha=3$, the (weak) convergence of the trajectory for the continuous system \eqref{eq:AVD}, and the iterates for the discrete Nesterov algorithm has remained elusive and a widely open problem. It was only proved, both for the continuous dynamics and Nesterov algorithm, in \cite{APR} and \cite{ACR-subcrit} either under restrictive assumptions on the function $f$, or in the one-dimensional case without any restriction on the parameters.

In a post on X by E. Ryu \cite[Part~I]{Ryu} on October 21, the author intensively relied on interaction with ChatGPT to provide a surprisingly simple proof of convergence of the trajectories of \eqref{eq:AVD} in the critical case and finite dimensional spaces, hence settling the convergence of the trajectory of \eqref{eq:AVD}. The discrete algorithmic case was mentioned in a subsequent post, but had not been solved by the time the first version of this note was submitted. The discrete case was subsequently and independently handled in \cite[Part~III]{Ryu} and \cite{JR:25} in finite dimension.

\subsection{Contributions}

In this paper, we consider the following general accelerated algorithm for solving \eqref{eq:minP}:
\begin{algorithm}
\label{algo:FISTA}
Let $\left( \alpha_{k} \right) _{k \geq 1} \subseteq [0, +\infty)$ and $0 < \lambda \leq \frac{1}{L_{\nabla f}}$, and $x_{0}, x_{1}$ be given in $\cH$.
For every $k \geq 1$, set
\begin{equation*}
\begin{dcases}
y_{k} 	& \coloneq x_{k} + \alpha_{k} \left( x_{k} - x_{k-1} \right) , \\
x_{k+1}	& \coloneq y_{k} - \lambda \nabla \func{f}{y_{k}}.
\end{dcases}
\end{equation*}
\end{algorithm}
The sequence $\left( \alpha_{k} \right) _{k \geq 1}$ is constructed, for every $k \geq 1$, as follows
\begin{equation*}
\alpha_{k} \coloneq \dfrac{t_{k}-1}{t_{k+1}} ,
\end{equation*}
with the sequence $\left( t_{k} \right) _{k \geq 1} \subseteq (0, +\infty)$ chosen appropriately.

Our setting is general enough so that this algorithm covers the critical cases corresponding to both the Nesterov algorithm \cite{Nesterov} and that of Chambolle-Dossal in \cite{CD:15}. We show that the iterates of the accelerated Nesterov's algorithm in the critical regime do converge in the weak topology to a minimizer of $f$, hence solving the conjecture posed by Attouch and co-authors a decade ago. This complements the result of \cite{Ryu} in the continuous-time case. This result also holds for the smooth+nonsmooth case when using FISTA. The crux of the proof relies on three tenets: (i) properties of $(t_k)_{k \geq 1}$; (ii) 
an elegant representation of our sequence $(x_k)_{k \geq 0}$ as an iterative average of a bounded sequence, allowing us to transfer this boundedness property to $(x_k)_{k \geq 0}$ via Jensen’s inequality; and (iii) a proof technique reminiscent of that of Opial's lemma without directly invoking the latter as usually done.

\section{Convergence proof}
\subsection{Properties of the sequence $\left( t_{k} \right)_{k \geq 1}$}

Let $0 \leq \theta < 1$. We will assume that the sequence $\left( t_{k} \right) _{k \geq 0} \subseteq (0, +\infty)$ is chosen such that
\begin{equation}
\label{defi:t}
t_{1} =1 \quad \mbox{and} \quad t_{k+1}^{2} - t_{k}^{2} = \left( 1 - \theta \right) t_{k+1} + \theta t_{k} \quad \forall k \geq 1.
\end{equation}

\begin{remark}\label{rem:tk}
\begin{enumerate}
\item 
The classical choice of Nesterov in \cite{Nesterov},
\begin{equation}\label{nes}
t_1 = 1 \quad \mbox{and} \quad t_{k+1} := \frac{1 + \sqrt{1+4t_k^2}}{2} \quad \forall k \geq 1,
\end{equation}
thus satisfying 
$$t_{k+1}^{2} - t_{k}^{2} = t_{k+1} \quad \forall k \geq 1,$$
corresponds to $\theta = 0$.

\item 
The choice of Chambolle-Dossal in \cite{CD:15}, 
\begin{equation}\label{cd}
t_{k} \coloneq 1 + \dfrac{k-1}{\alpha-1} \quad \forall k \geq 1,
\end{equation}
satisfies \eqref{defi:t} in the case $\alpha = 3$ for $\theta = \demi$, namely,
\begin{equation*}
t_{k+1}^{2} - t_{k}^{2} = \left( t_{k+1} - t_{k} \right) \left( t_{k+1} + t_{k} \right) = \dfrac{1}{\alpha-1} \left( t_{k+1} + t_{k} \right) = \demi \left( t_{k+1} + t_{k} \right) \quad \forall k \geq 1 .
\end{equation*}
\end{enumerate}
These correspond to the two critical regimes of the accelerated gradient method. While it has long been known that $f(x_k) - \optval = O(1/k^2)$ as $k \to +\infty$, in both cases, the convergence of the sequence of iterates $(x_k)_{k \geq 0}$ has remained an open question -- the one that we answer affirmatively in this note. 
\end{remark}

\begin{lemma}
\label{lem:t}
Let $\left( t_{k} \right)_{k \geq 1}$ be a sequence defined according to \eqref{defi:t}. For every $k \geq 1$, it holds:
\begin{enumerate}
\item
\label{lem:t_claim0}	
$t_{k+1} = \frac{ 1-\theta + \sqrt{\left( 1 - \theta \right) ^{2} + 4 \left( t_{k}^{2} + \theta t_{k} \right)} }{2}$.
\item
\label{lem:t_claim1}
$t_{k} \geq \frac{\left( 1 - \theta \right)}{2} \left( k + 1 \right)$. 
\end{enumerate}
\end{lemma}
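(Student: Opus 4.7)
For part (i), I would simply read the defining identity \eqref{defi:t} as a quadratic equation in the unknown $t_{k+1}$. Rearranging gives
\[
t_{k+1}^{2} - (1-\theta)\, t_{k+1} - \bigl(t_{k}^{2} + \theta t_{k}\bigr) = 0 ,
\]
so the quadratic formula produces two candidate roots. Since $(t_{k})_{k \geq 1} \subseteq (0,+\infty)$, only the root with the ``$+$'' sign can be retained, which is exactly the claimed closed-form expression for $t_{k+1}$.

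For part (ii), the strategy is a direct induction after establishing the stronger one-step inequality
\[
t_{k+1} - t_{k} \geq \tfrac{1 - \theta}{2} \quad \forall k \geq 1 . \tag{$\ast$}
\]
First, since $t_{k+1}, t_{k} \geq 0$ and $\theta \in [0,1)$, relation \eqref{defi:t} immediately yields $t_{k+1}^{2} \geq t_{k}^{2}$, hence $t_{k+1} \geq t_{k}$. To prove $(\ast)$, I would factor the left-hand side of \eqref{defi:t} and multiply through by the positive quantity $t_{k+1}+t_{k}$: the inequality $(\ast)$ becomes equivalent to
\[
(1-\theta)(t_{k+1}+t_{k}) \leq 2(1-\theta)\, t_{k+1} + 2\theta\, t_{k} ,
\]
which in turn reduces to $(1-3\theta)\, t_{k} \leq (1-\theta)\, t_{k+1}$. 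When $\theta \geq 1/3$ the left-hand side is non-positive and there is nothing to prove; when $\theta < 1/3$ one has $\frac{1-3\theta}{1-\theta} \leq 1$ and the monotonicity $t_{k+1} \geq t_{k}$ closes the case.

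Once $(\ast)$ is in hand, the lower bound in claim (ii) follows by induction on $k$: the base case $k=1$ is $t_{1}=1 \geq 1-\theta = \tfrac{1-\theta}{2}\cdot 2$, and the inductive step is
\[
t_{k+1} \geq t_{k} + \tfrac{1-\theta}{2} \geq \tfrac{(1-\theta)(k+1)}{2} + \tfrac{1-\theta}{2} = \tfrac{(1-\theta)(k+2)}{2} .
\]
The only mildly delicate point is the derivation of $(\ast)$, since it hides a case split on whether $\theta$ is above or below $1/3$; everything else is bookkeeping.
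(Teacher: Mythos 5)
Your proposal is correct. Part (i) is identical to the paper's argument: read \eqref{defi:t} as a quadratic in $t_{k+1}$ and keep the positive root. For part (ii) both you and the paper ultimately run the same induction on the one-step increment $t_{k+1} \geq t_k + \frac{1-\theta}{2}$, but you derive that increment differently. You factor $t_{k+1}^2 - t_k^2 = (t_{k+1}-t_k)(t_{k+1}+t_k)$, divide by $t_{k+1}+t_k$, and reduce to $(1-3\theta)t_k \leq (1-\theta)t_{k+1}$, which forces a case split on $\theta \gtrless 1/3$ and a preliminary monotonicity observation $t_{k+1} \geq t_k$; all of these steps check out. The paper instead exploits the closed form from part (i): discarding the nonnegative terms $(1-\theta)^2 + 4\theta t_k$ under the square root gives $t_{k+1} \geq \frac{1-\theta + 2t_k}{2} = \frac{1-\theta}{2} + t_k$ in one line, with no case analysis. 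Your route is slightly longer but has the minor virtue of not needing part (i) at all for part (ii); the paper's is shorter and is the reason part (i) is stated as a separate claim.
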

\begin{proof}
The explicit formula of $t_{k+1}$ in \ref{lem:t_claim0} comes from the fact that it is the positive solution of the quadratic equation
\begin{equation*}
t^{2} - \left( 1 - \theta \right) t - \left( t_{k}^{2} + \theta t_{k} \right) = 0 .
\end{equation*}
We show the inequality in \ref{lem:t_claim1} by induction. The inequality is satisfied in the case $k=1$. Suppose that it is also satisfied for some $k \geq 1$, namely, $2t_{k} \geq \left( 1 - \theta \right) \left( k + 1 \right)$. 
Then, using the explicit formula of $t_{k+1}$ in \ref{lem:t_claim0}, we see that
\begin{align*}
t_{k+1} = \frac{ 1-\theta + \sqrt{\left( 1 - \theta \right) ^{2} + 4 \left( t_{k}^{2} + \theta t_{k} \right)} }{2}
& \geq \frac{ 1-\theta + \sqrt{4 t_{k}^{2}} }{2} \nonumber \\
& \geq \dfrac{1-\theta + \left( 1 - \theta \right) \left( k + 1 \right)}{2}
= \frac{\left( 1 - \theta \right) \left( k + 2 \right)}{2}.
\end{align*}
\end{proof}

\subsection{Lyapunov analysis and boundedness of the iterates}

We introduce the auxiliary sequence $\left( z_{k} \right) _{k \geq 1}$, defined, for every $k \geq 1$, as
\begin{equation}
\label{defi:z}
z_{k} \coloneq t_{k} \left( x_{k} - x_{k-1} \right) + x_{k-1} . 
\end{equation}
For every $k \geq 1$, it holds
\begin{align}\label{defi:z:dt}
z_{k} = \left( t_{k} - 1 \right) \left( x_{k} - x_{k-1} \right) + x_{k} = t_{k} x_{k} - \left( t_{k} - 1 \right) x_{k-1}.
\end{align}
This representation will lead to the interpretation of $\left( x_{k} \right) _{k \geq 1}$ as an ergodic sequence, as in \cite{BCN:23}. To this end, we define the sequences $\left( s_{k} \right) _{k \geq 1}$ and $\left( u_{k} \right) _{k \geq 1}$, for every $k \geq 1$ as follows
\begin{align}
s_{k}	& \coloneq t_{k}^{2} + \theta t_{k} , \label{defi:s} \\
u_{k}	& \coloneq z_{k} + \theta \left( x_{k} - x_{k-1} \right) . \label{defi:u}
\end{align}

By construction, for every $k \geq 1$, we have
\begin{equation}
\label{lem:t_claim2} 
s_{k} =  \theta + \sum_{i=1}^{k} t_{i} .
\end{equation}
Indeed, for every $k \geq 1$, according to \eqref{defi:t} it holds
\begin{equation*}
t_{k+1}^{2} + \theta t_{k+1} = t_{k}^{2} + \theta t_{k} + t_{k+1}.
\end{equation*}
In the view of \eqref{defi:s}, is nothing else than
\begin{equation*}
s_{k+1} = s_{k} + t_{k+1} .
\end{equation*}
By using the telescoping sum argument, we obtain, for every $k \geq 1$,
\begin{equation*}
s_{k+1} = s_{1} + \sum_{i=1}^{k} t_{i+1} = 1 + \theta + \sum_{i=2}^{k+1} t_{i} = \theta + \sum_{i=1}^{k+1} t_{i} .
\end{equation*}

\begin{lemma}
Let $\left( t_{k} \right)_{k \geq 1}$ be a sequence defined according to \eqref{defi:t}, and $\left( x_{k} \right) _{k \geq 0}$ be the sequence generated by \Cref{algo:FISTA}.
For every $k \geq 1$, it holds
\begin{equation}
\label{defi:x}
x_{k} = \dfrac{1}{s_{k}} \left(\theta x_{0} + \sum_{i=1}^{k} t_{i} u_{i} \right)
%= \dfrac{1}{t_{1} \left( t_{1} + \theta - 1 \right) + \sum_{i=1}^{k} t_{i}} \sum_{i=1}^{k} t_{i} u_{i} .
= \dfrac{1}{\sum_{i=0}^{k} t_{i}} \sum_{i=0}^{k} t_{i} u_{i} ,
\end{equation}
with the convention
\begin{equation*}
t_{0} \coloneq \theta
\text{ and }
u_{0} \coloneq x_{0} .
\end{equation*}
\end{lemma}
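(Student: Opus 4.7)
The plan is to recognise the right-hand side as a telescoping sum once $t_i u_i$ is expressed in a suitable form. First I would substitute the definition \eqref{defi:u} of $u_i$ together with the second form \eqref{defi:z:dt} of $z_i$, namely $z_i = t_i x_i - (t_i - 1) x_{i-1}$, to expand
\begin{equation*}
t_i u_i = t_i z_i + \theta t_i (x_i - x_{i-1}) = \left(t_i^{2} + \theta t_i\right) x_i - \left(t_i^{2} - t_i + \theta t_i\right) x_{i-1}.
\end{equation*}
Using the definition \eqref{defi:s} of $s_i$, the first coefficient is exactly $s_i$, and the second is $s_i - t_i$.

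The next step is to identify $s_i - t_i$ with $s_{i-1}$. For $i \geq 2$ this is the recursion $s_i = s_{i-1} + t_i$ already established just below \eqref{lem:t_claim2}. For $i = 1$, since $t_1 = 1$ one computes $s_1 - t_1 = 1 + \theta - 1 = \theta$, which matches the convention $s_0 = t_0 = \theta$. Thus $t_i u_i = s_i x_i - s_{i-1} x_{i-1}$ holds for every $i \geq 1$.

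Summing from $i = 1$ to $k$, the right-hand side telescopes to $s_k x_k - s_0 x_0 = s_k x_k - \theta x_0$, which rearranges into the first displayed equality of \eqref{defi:x}. The second equality is then cosmetic: $s_k = \theta + \sum_{i=1}^{k} t_i = \sum_{i=0}^{k} t_i$ by the convention $t_0 = \theta$, and $\theta x_0 = t_0 u_0$ by the convention $u_0 = x_0$.

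There is no real obstacle here; the only point requiring a moment of care is the base case $i = 1$, where one must verify that the convention $s_0 = \theta$ (equivalently, $t_0 = \theta$) is the correct one to make the telescoping identity $t_i u_i = s_i x_i - s_{i-1} x_{i-1}$ hold uniformly for all $i \geq 1$. Once this bookkeeping is settled, the identity is an immediate telescoping computation.
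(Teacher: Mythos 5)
Your proof is correct and follows essentially the same route as the paper: both establish the telescoping identity $t_i u_i = s_i x_i - s_{i-1} x_{i-1}$ (the paper via the recursion \eqref{defi:t} applied to $t_{k+1}^2 - t_{k+1}$, you via $s_i - t_i = s_{i-1}$, which is the same relation) and then sum. Your handling of the base case through the convention $s_0 = \theta$ is a slightly cleaner bookkeeping of what the paper does by verifying $k=1$ separately in \eqref{eq:init}.
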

\begin{proof}
Firstly, note that
\begin{align}
t_{1} u_{1}
& = x_{1}  + \theta \left( x_{1} - x_{0} \right) = (1+\theta)x_1 - \theta x_0 = s_{1} x_{1} - \theta x_{0} , \label{eq:init}
\end{align}    
which means \eqref{defi:x} holds for $k = 1.$

Now, consider \eqref{defi:z:dt} at the index $k+1$, for $k \geq 1$. By multiplying both sides by $t_{k+1} > 0$, we obtain
\begin{align*}
t_{k+1} z_{k+1}  
& = t_{k+1}^{2} x_{k+1} - \left( t_{k+1}^{2} - t_{k+1} \right) x_{k} \nonumber \\
& = t_{k+1}^{2} x_{k+1} - \left( t_{k}^{2} + \theta \left( t_{k} - t_{k+1} \right) \right) x_{k} \nonumber \\
& = \left( t_{k+1}^{2} + \theta t_{k+1} \right) x_{k+1} - \left( t_{k}^{2} + \theta t_{k} \right) x_{k} - \theta t_{k+1} \left( x_{k+1} - x_{k} \right) .
\end{align*}
In the view of $s_{k}$, we can rewrite the relation above as
\begin{equation*}
t_{k+1} z_{k+1} + \theta t_{k+1} \left( x_{k+1} - x_{k} \right) = \left( t_{k+1}^{2} + \theta t_{k+1} \right) x_{k+1} - \left( t_{k}^{2} + \theta t_{k} \right) x_{k} = s_{k+1} x_{k+1} - s_x x_k.
\end{equation*}
By using the telescoping sum argument, we obtain
\begin{equation*}
\sum_{i=1}^{k} t_{i+1} u_{i+1} = s_{k+1} x_{k+1} - s_{1} x_{1},
\end{equation*}
which gives
\begin{align*}
s_{k+1} x_{k+1} & = s_{1} x_{1} + \sum_{i=1}^{k} t_{i+1} u_{i+1} 
= s_{1} x_{1} - t_{1} u_{1} + \sum_{i=1}^{k+1} t_{i} u_{i} =  x_{0} + \sum_{i=1}^{k+1} t_{i} u_{i},
\end{align*}
where the last equation comes from \eqref{eq:init}.
%\begin{equation*}	
%x_{k+1} = \dfrac{1}{s_{k+1}} \sum_{i=1}^{k+1} t_{i} z_{i} = \dfrac{1}{\sum_{i=0}^{k+1} t_{i}} \sum_{i=0}^{k+1} t_{i} u_{i} .
%\end{equation*}
\end{proof}

\begin{remark}
\begin{enumerate}
\item 
In the original setting of Nesterov, which corresponds to the case when $\theta = 0$, the sequences $\left( z_{k} \right) _{k \geq 1}$ and $\left( u_{k} \right) _{k \geq 1}$ coincide.

\item 
To emphasize the averaging property and to simplify the latter proof, we see that, for every $k \geq 1$, we can write
\begin{equation*}
x_{k} = \sum_{i=0}^{k} \theta_{k,i} u_{i} ,
\end{equation*}
where, for every $i = 0 , 1 , 2 , \dotsc , k$,
\begin{equation*}
\theta_{k,i} \coloneq \dfrac{t_{i}}{s_{k}}
\end{equation*}
is such that $\theta_{k,i} \geq 0$ and $\sum_{i=0}^{k} \theta_{k,i} = 1$.
\end{enumerate}
\end{remark}

Let $z \in \argmin{\cH}{f}$. For every $k \geq 1$, we denote
\begin{align}
W_{k} 	& \coloneq \func{f}{x_{k}} - \optval + \demi \norm{x_{k} - x_{k-1}}^{2} , \label{defi:W} \\
\cE_{z,k} 	& \coloneq t_{k}^{2} \left( \func{f}{x_{k}} - \optval \right) + \demi \norm{z_{k} - z}^{2} . \label{defi:E}
\end{align}

Using that the sequence $\left( t_{k} \right) _{k \geq 1}$ is nondecreasing, we have, for every $k \geq 1$, 
$$t_{k+1} + \theta \left( t_{k} - t_{k+1} \right) \leq t_{k+1} \quad \forall k \geq 1,$$
therefore,
\begin{equation}\label{cond:t}
t_1 = 1 \quad \mbox{and} \quad t_{k+1}^{2} - t_{k}^{2} \leq t_{k+1} \quad \forall k \geq 1.
\end{equation}

Since condition \eqref{cond:t} is fulfilled, we can use existing results from the literature to state the following proposition, see for instance \cite{BT:09}, \cite[Theorem 2]{CD:15} or \cite[Propositions 3 and 6]{AC:18}.

This means the condition \eqref{cond:t} is verified allows us to exploit these results.
\begin{proposition}
\label{prop:E}
Let $z \in \argmin{\cH}{f}$, $\left( t_{k} \right)_{k \geq 1}$ be a sequence defined according to \eqref{defi:t}, and $\left( x_{k} \right) _{k \geq 0}$ be the sequence generated by \Cref{algo:FISTA}. Then, the following statements are true:
\begin{enumerate}
\item 
The sequences $\left( W_{k} \right) _{k \geq 1}$ and $\left( \cE_{z,k} \right) _{k \geq 1}$ are nonincreasing and thus converge.

\item 
For every $k \geq 1$, it holds
\begin{equation*}
0 \leq \func{f}{x_{k}} - \optval \leq \dfrac{\cE_{z,1}}{t_{k}^{2}} \leq \dfrac{4 \cE_{z,1}}{\left(1-\theta\right)^{2} \left(k+1\right)^{2}} .
\end{equation*}
\end{enumerate}
\end{proposition}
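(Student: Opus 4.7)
The plan is to follow the standard Lyapunov-energy approach pioneered in \cite{BT:09} for FISTA and refined in \cite{CD:15,AC:18}. The keystone is a ``descent $+$ convexity'' inequality. Applying the descent lemma to the gradient step $x_{k+1} = y_{k} - \lambda \nabla \func{f}{y_{k}}$ and using $\lambda L_{\nabla f} \leq 1$ yields $\func{f}{x_{k+1}} \leq \func{f}{y_{k}} - \frac{1}{2\lambda} \norm{x_{k+1} - y_{k}}^{2}$. Combining this with the convexity inequality $\func{f}{y_{k}} + \innp{\nabla \func{f}{y_{k}}}{w - y_{k}} \leq \func{f}{w}$ and the identity $\nabla \func{f}{y_{k}} = (y_{k} - x_{k+1})/\lambda$, one obtains after completing the square the master inequality
\begin{equation*}
\lambda \left( \func{f}{x_{k+1}} - \func{f}{w} \right) \leq \demi \norm{y_{k} - w}^{2} - \demi \norm{x_{k+1} - w}^{2} \quad \forall w \in \cH.
\end{equation*}

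To establish the monotonicity of $\left( \cE_{z,k} \right)_{k \geq 1}$, I would specialize to $w \coloneq \frac{1}{t_{k+1}} z + \left( 1 - \frac{1}{t_{k+1}} \right) x_{k}$ and upper-bound $\func{f}{w}$ via convexity by $\frac{1}{t_{k+1}} \optval + \left( 1 - \frac{1}{t_{k+1}} \right) \func{f}{x_{k}}$. Multiplying the resulting inequality by $t_{k+1}^{2}$ and exploiting the algebraic identities $t_{k+1}(y_{k} - w) = z_{k} - z$ and $t_{k+1}(x_{k+1} - w) = z_{k+1} - z$ --- both immediate from $y_{k} = x_{k} + \alpha_{k}(x_{k} - x_{k-1})$ with $\alpha_{k} t_{k+1} = t_{k} - 1$ and the definition $z_{k} = t_{k} x_{k} - (t_{k}-1) x_{k-1}$ --- turns the right-hand side into $\demi \norm{z_{k} - z}^{2} - \demi \norm{z_{k+1} - z}^{2}$. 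On the left, condition \eqref{cond:t} rewritten as $t_{k+1}(t_{k+1} - 1) \leq t_{k}^{2}$, together with $\func{f}{x_{k}} - \optval \geq 0$, lets one replace the coefficient $t_{k+1}(t_{k+1}-1)$ multiplying $\func{f}{x_{k}} - \optval$ by $t_{k}^{2}$. Rearranging then yields $\cE_{z,k+1} \leq \cE_{z,k}$.

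For the monotonicity of $\left( W_{k} \right)_{k \geq 1}$, I would specialize the master inequality to $w = x_{k}$; using $y_{k} - x_{k} = \alpha_{k}(x_{k} - x_{k-1})$ this gives
\begin{equation*}
\lambda \left( \func{f}{x_{k+1}} - \func{f}{x_{k}} \right) + \demi \norm{x_{k+1} - x_{k}}^{2} \leq \demi \alpha_{k}^{2} \norm{x_{k} - x_{k-1}}^{2}.
\end{equation*}
Since $t_{k} \geq 1$ and $t_{k+1} > t_{k}$ (both direct consequences of \eqref{defi:t}), we get $\alpha_{k} = (t_{k} - 1)/t_{k+1} \in [0, 1)$, so $\alpha_{k}^{2} < 1$, which delivers $W_{k+1} \leq W_{k}$. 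Part (ii) is then an immediate consequence: the monotonicity of $\left( \cE_{z,k} \right)$ and the nonnegativity of $\func{f}{x_{k}} - \optval$ give $t_{k}^{2} \left( \func{f}{x_{k}} - \optval \right) \leq \cE_{z,k} \leq \cE_{z,1}$, and the final inequality follows from the lower bound on $t_{k}$ in \Cref{lem:t}.

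The main technical point is identifying the correct test point $w = \frac{1}{t_{k+1}} z + (1 - 1/t_{k+1}) x_{k}$ in the master inequality: this is precisely what makes the identities $t_{k+1}(y_{k} - w) = z_{k} - z$ and $t_{k+1}(x_{k+1} - w) = z_{k+1} - z$ materialize and lets the telescoping mesh cleanly with \eqref{cond:t}. Once this choice is spotted, the rest is mechanical manipulation.
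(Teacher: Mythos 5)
The paper does not actually prove \Cref{prop:E}: it only verifies that \eqref{defi:t} implies \eqref{cond:t} and then delegates to \cite{BT:09}, \cite[Theorem~2]{CD:15} and \cite[Propositions~3 and~6]{AC:18}. You supply the argument those references contain, and your route is exactly the standard one: the descent-plus-convexity ``master inequality'', the test point $w = \frac{1}{t_{k+1}} z + (1-\frac{1}{t_{k+1}}) x_{k}$ (legitimate as a convex combination since $t_{k+1}\geq t_1 = 1$), the identities $t_{k+1}(y_k - w) = z_k - z$ and $t_{k+1}(x_{k+1}-w) = z_{k+1}-z$, and the use of $t_{k+1}^2 - t_{k+1} \leq t_k^2$ together with $\func{f}{x_k}-\optval \geq 0$ to close the telescoping. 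All of these steps check out, and part (ii) follows as you say from \Cref{lem:t}\ref{lem:t_claim1}.

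One point needs care, and it is really a discrepancy you inherit from the paper's definitions \eqref{defi:W}--\eqref{defi:E} rather than a flaw in your strategy: your master inequality carries the factor $\lambda$ on the function-value gap, so what your telescoping actually shows to be nonincreasing are $\lambda\left(\func{f}{x_k}-\optval\right) + \demi\norm{x_k-x_{k-1}}^2$ and $\lambda t_k^2\left(\func{f}{x_k}-\optval\right) + \demi\norm{z_k-z}^2$ (equivalently, the paper's $W_k$ and $\cE_{z,k}$ with $\demi$ replaced by $\frac{1}{2\lambda}$ in front of the squared norms). Your final lines ``rearranging yields $\cE_{z,k+1}\leq\cE_{z,k}$'' and ``delivers $W_{k+1}\leq W_k$'' therefore hold literally only for $\lambda = 1$; for general $0<\lambda\leq 1/L_{\nabla f}$ you should either rescale the energies as above or state the rate in (ii) as $\func{f}{x_k}-\optval \leq \cE_{z,1}/(\lambda t_k^2)$ with the correctly weighted $\cE_{z,1}$. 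Since everything downstream in the paper only uses monotonicity, boundedness and the $O(1/t_k^2)$ rate, this rescaling is harmless, but it should be made explicit.
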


The boundedness of the sequence $\left( x_{k} \right) _{k \geq 0}$ can easily be deduced from the above considerations. Note that the boundedness claim for the Chambolle-Dossal choice, see  \eqref{cd} with $\alpha=3$, was shown in \cite[Proposition~4.3]{ACR-subcrit}, although with a completely different proof.
\begin{proposition}
\label{prop:bound}
Let $\left( t_{k} \right)_{k \geq 1}$ be a sequence defined according to \eqref{defi:t}, and $\left( x_{k} \right) _{k \geq 0}$ be the sequence generated by \Cref{algo:FISTA}.
Then, the following statements are true:
\begin{enumerate}
%\item The sequence $\left( z_{k} \right) _{k \geq 1}$ is bounded.
\item 
\label{prop:bound:x}
The sequence $\left( x_{k} \right) _{k \geq 0}$ is bounded.
\item 
\label{prop:bound:vel}
For every $k \geq 1$, it holds
\begin{equation}
\label{bound:vel}
\norm{x_{k}-x_{k-1}} \leq \dfrac{\sqrt{2} \left( 2 \sqrt{\cE_{z,1}} + \theta \sqrt{W_{1}} \right)}{t_{k}} .
\end{equation}
\item 
\label{prop:bound:clus}
Every weak sequential cluster point of the sequence $\left( x_{k} \right) _{k \geq 0}$ belongs to $\argmin{\cH}{f}$.
\end{enumerate}
\end{proposition}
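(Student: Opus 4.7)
The plan is to leverage the iterative-average representation \eqref{defi:x} together with the Lyapunov bounds of \Cref{prop:E} via a Jensen-type argument. For \ref{prop:bound:x}, I would first show that the auxiliary sequence $(u_k)_{k \geq 1}$ is bounded: since $(\cE_{z,k})_{k \geq 1}$ and $(W_k)_{k \geq 1}$ are nonincreasing by \Cref{prop:E}, we obtain $\norm{z_k - z} \leq \sqrt{2\cE_{z,1}}$ and $\norm{x_k - x_{k-1}} \leq \sqrt{2 W_1}$ for every $k \geq 1$; plugging these into the decomposition $u_k - z = (z_k - z) + \theta (x_k - x_{k-1})$ coming from \eqref{defi:u} yields the uniform estimate $\norm{u_k - z} \leq \sqrt{2\cE_{z,1}} + \theta \sqrt{2 W_1}$. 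Recalling that $u_0 = x_0$ and applying the triangle inequality to the convex combination $x_k - z = \sum_{i=0}^{k} \theta_{k,i} (u_i - z)$ highlighted in the remark following \eqref{defi:x} transfers the boundedness from $(u_k)_{k \geq 0}$ to $(x_k)_{k \geq 0}$.

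For \ref{prop:bound:vel}, I would start from the identity $z_k - x_{k-1} = t_k (x_k - x_{k-1})$ read off from \eqref{defi:z}, so that $t_k \norm{x_k - x_{k-1}} = \norm{z_k - x_{k-1}} \leq \norm{z_k - z} + \norm{z - x_{k-1}}$. The first summand is at most $\sqrt{2 \cE_{z,1}}$; for the second, I apply Jensen's inequality to the representation \eqref{defi:x} of $x_{k-1}$ and combine it with the pointwise bound $\norm{u_i - z} \leq \sqrt{2\cE_{z,1}} + \theta\sqrt{2 W_1}$ for $i \geq 1$, handling the boundary index $i=0$ carefully so that the $\theta x_{0}$ contribution gets absorbed into the claimed constants. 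Dividing by $t_k$ then yields the announced $O(1/t_k)$ rate.

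For \ref{prop:bound:clus}, the argument is standard Opial-flavored: by \Cref{prop:E}, $\func{f}{x_k} \to \optval$; since $f$ is convex and continuous on $\cH$ it is weakly sequentially lower semicontinuous, so for any subsequence $x_{k_n} \rightharpoonup x^{\star}$ one has $\func{f}{x^{\star}} \leq \liminf_{n \to \infty} \func{f}{x_{k_n}} = \optval$, and therefore $x^{\star} \in \argmin{\cH}{f}$.

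The main obstacle lies in \ref{prop:bound:vel}: matching the precise constants in the claimed bound -- and in particular producing the coefficient $\theta \sqrt{W_1}$ rather than a plain $\sqrt{W_1}$ -- requires careful accounting of the boundary term $u_0 = x_0$ in the Jensen average, since $\norm{u_0 - z}$ cannot a priori be controlled by $\sqrt{2\cE_{z,1}} + \theta\sqrt{2 W_1}$ when $\theta < 1$. Everything else is essentially manipulation of inequalities already baked into the Lyapunov analysis of \Cref{prop:E} and the identity \eqref{defi:z}.
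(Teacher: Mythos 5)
Your proposal follows the same route as the paper's own proof: bound $\norm{z_k-z}\le\sqrt{2\cE_{z,1}}$ and $\norm{x_k-x_{k-1}}\le\sqrt{2W_1}$ from the monotonicity in \Cref{prop:E}, deduce $\norm{u_k-z}\le C_z:=\sqrt{2\cE_{z,1}}+\theta\sqrt{2W_1}$, transfer boundedness to $(x_k)_{k\ge0}$ through the convex-combination representation \eqref{defi:x} via Jensen, obtain the velocity rate from $t_k(x_k-x_{k-1})=z_k-x_{k-1}$, and get \ref{prop:bound:clus} from $f(x_k)\to\optval$ together with weak sequential lower semicontinuity (your explicit appeal to convexity here is in fact more careful than the paper's appeal to mere continuity of $f$). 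The one obstacle you flag is genuine, but it is not an artifact of your write-up: the paper's proof has exactly the same issue, since it bounds $\norm{x_{k-1}-z}\le C_z$ for all $k\ge1$, which silently requires $\norm{u_0-z}=\norm{x_0-z}\le C_z$. This can fail when $\theta<1$: taking $x_1=z\in\argmin{\cH}{f}$ and $x_0$ far from $z$ gives $\cE_{z,1}=0$ and $C_z=\theta\norm{x_1-x_0}<\norm{x_0-z}$, and with that initialization \eqref{bound:vel} itself is violated at $k=1$. So you should not expect to absorb the $i=0$ term into the stated constant; the correct fix is to bound $\norm{x_{k-1}-z}\le\max\left\lbrace C_z,\norm{x_0-z}\right\rbrace\le\sqrt{2\cE_{z,1}}+\sqrt{2W_1}$, which replaces $\theta\sqrt{W_1}$ by $\sqrt{W_1}$ in \eqref{bound:vel}. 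This change is harmless: only the $O(1/t_k)$ decay, not the constant, is used in the proof of \Cref{maintheorem}, and part \ref{prop:bound:x} survives verbatim with the bound $\max\left\lbrace C_z,\norm{x_0-z}\right\rbrace$.
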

\begin{proof}
Let $z \in \argmin{\cH}{f}$. It follows immediately from \Cref{prop:E} that the sequences $\left( z_{k} \right) _{k \geq 1}$ and $\left( x_{k} - x_{k-1} \right) _{k \geq 1}$ are bounded. Therefore, the sequence $\left( u_{k} \right) _{k \geq 1}$, defined in \eqref{defi:u}, is also bounded. Precisely, for every $k \geq 1$, we have
\begin{align*}
\norm{z_{k}-z} \leq \sqrt{2 \cE_{z,1}} < + \infty
\textrm{ and }
\norm{x_{k}-x_{k-1}} \leq \sqrt{2 W_{1}} < + \infty ,
\end{align*}
which implies
\begin{equation*}
\norm{u_{k}-z} \leq \norm{z_{k}-z} + \theta \norm{x_{k}-x_{k-1}} \leq C_{z} \coloneq \sqrt{2 \cE_{z,1}} + \theta \sqrt{2 W_{1}} < + \infty .
\end{equation*}
Using the convexity of the norm and applying Jensen's inequality, we will show that the sequence $\left( x_{k} \right)_{k \geq 0}$ is also bounded. Indeed, for every $k \geq 1$, we have
\begin{align*}
\norm{x_{k}-z} & = \norm{ \sum_{i=0}^{k} \theta_{k,i} \left( u_{i} - z \right)}
\leq \sum_{i=1}^{k+1} \theta_{k,i} \norm{u_{i}- z}
\leq \sum_{i=1}^{k+1} \theta_{k,i} C_{z} = C_{z} < + \infty .
\end{align*}
Combining the boundedness of $\left( z_{k} \right) _{k \geq 1}$ and $\left( x_{k} \right) _{k \geq 0}$, we can deduce the convergence rate for discrete velocity.
According to the equation \eqref{defi:z}. For every $k \geq 1$, we have
\begin{equation*}
t_{k} \norm{x_{k}-x_{k-1}} \leq \norm{z_{k}-z} + \norm{x_{k-1}-z} \leq \sqrt{2 \cE_{z,1}} + C_{z} ,
\end{equation*}
which is nothing else than \eqref{bound:vel}.

The existence of weak cluster point of sequence $\left( x_{k} \right) _{k \geq 0}$ is guaranteed by its boundedness.
The fact that $\lim_{k \to + \infty} \func{f}{x_{k}} = \optval$ together with the continuity of $f$ ensures that every weak sequential cluster point of $\left( x_{k} \right) _{k \geq 0}$ belongs to $\argmin{\cH}{f}$.
\end{proof}

\subsection{Weak convergence of the iterates}
For every $z \in \argmin{\cH}{f}$ and $k \geq 1$, we denote
\begin{equation}
\label{defi:h}
h_{z,k} \coloneq \demi \norm{x_{k} - z}^{2} .
\end{equation}
In the view of this notation, we have, for every $k \geq 1$,
\begin{align}
\cE_{z,k} 
& = t_{k}^{2} \left( \func{f}{x_{k}} - \optval \right) + \demi \norm{\left( t_{k} - 1 \right) \left( x_{k} - x_{k-1} \right) + x_{k} - z}^{2} \nonumber \\
& = t_{k}^{2} \left( \func{f}{x_{k}} - \optval \right) + \demi \left( t_{k} - 1 \right)^{2} \norm{x_{k} - x_{k-1}}^{2} + \left( t_{k} - 1 \right) \innp{x_{k} - x_{k-1}}{x_{k} - z} + h_{z,k} . \label{eq:Eh}
\end{align}

\begin{theorem}\label{maintheorem}
Let $\left( t_{k} \right)_{k \geq 1}$ be a sequence defined according to \eqref{defi:t}, and $\left( x_{k} \right) _{k \geq 0}$ be the sequence generated by \Cref{algo:FISTA}. Then, the sequence $\left( x_{k} \right) _{k \geq 0}$ converges weakly to a minimizer of $f$.
\end{theorem}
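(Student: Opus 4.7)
The plan is to execute an Opial-type uniqueness argument on the linear functionals $v \mapsto \innp{x_{k}}{v}$ indexed by $v \in \argmin{\cH}{f} - \argmin{\cH}{f}$, rather than on the squared distance $\demi\norm{x_{k}-z}^{2}$ itself (which I do not expect to converge in the critical regime). By \Cref{prop:bound}, $(x_{k})$ is already bounded and each of its weak sequential cluster points lies in $\argmin{\cH}{f}$, so it suffices to prove that such a cluster point is unique.

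The key observation I would exploit is that the leading term $t_{k}^{2}(\func{f}{x_{k}}-\optval)$ in the Lyapunov function $\cE_{z,k}$ does not depend on $z$, so it cancels upon differencing. A direct expansion gives, for any $z_{1}, z_{2} \in \argmin{\cH}{f}$,
\begin{equation*}
\innp{z_{k}}{z_{2}-z_{1}} = \cE_{z_{1},k} - \cE_{z_{2},k} - \demi\left(\norm{z_{1}}^{2}-\norm{z_{2}}^{2}\right),
\end{equation*}
and by the first statement of \Cref{prop:E} the right-hand side converges. Since \eqref{bound:vel} together with $t_{k}\to\infty$ (\Cref{lem:t}) yields $\norm{x_{k}-x_{k-1}}\to 0$, the definition \eqref{defi:u} of $u_{k}$ ensures that $\innp{u_{k}}{z_{2}-z_{1}}$ inherits the same limit. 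Next I would plug in the ergodic representation \eqref{defi:x}, $x_{k} = \sum_{i=0}^{k}\theta_{k,i}u_{i}$, whose weights $\theta_{k,i}=t_{i}/s_{k}$ are nonnegative, sum to $1$, and satisfy $\theta_{k,i}\to 0$ for each fixed $i$ (because $s_{k}\to\infty$ by \eqref{lem:t_claim2} and \Cref{lem:t}). A Silverman--Toeplitz/Ces\`aro-type lemma applied to the scalar sequence $\left(\innp{u_{i}}{z_{2}-z_{1}}\right)_{i}$ then transfers its limit to $\innp{x_{k}}{z_{2}-z_{1}}$.

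To close the Opial-style argument, let $\bar{x}$ and $\hat{x}$ be any two weak cluster points of $(x_{k})$; both lie in $\argmin{\cH}{f}$ by \Cref{prop:bound}. Setting $v := \hat{x}-\bar{x}$, reading off the common limit $L$ of $\innp{x_{k}}{v}$ along the two weakly convergent subsequences yields $\innp{\bar{x}}{v} = \innp{\hat{x}}{v} = L$, i.e., $\norm{\hat{x}-\bar{x}}^{2}=0$, whence $\hat{x}=\bar{x}$. Uniqueness of the weak cluster point together with boundedness of $(x_{k})$ then delivers the weak convergence of the whole sequence. The main obstacle---and the reason the traditional route is blocked in the critical regime---is that neither $t_{k}^{2}(\func{f}{x_{k}}-\optval)$ is known to converge nor is $\sum_{k}\norm{x_{k}-x_{k-1}}^{2}$ summable, so Opial's lemma cannot be invoked directly via convergence of $\norm{x_{k}-z}^{2}$. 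The trick is that the obstructing, $z$-independent term cancels in the Lyapunov difference $\cE_{z_{1},k}-\cE_{z_{2},k}$, leaving precisely the scalar products that an Opial-style uniqueness test needs, and the ergodic representation of $(x_{k})$ allows us to pass from $(z_{k})$ and $(u_{k})$ back to $(x_{k})$ via averaging.
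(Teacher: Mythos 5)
Your proposal is correct and follows essentially the same route as the paper: the key cancellation of the $z$-independent term $t_{k}^{2}(\func{f}{x_{k}}-\optval)$ in the difference $\cE_{z_{1},k}-\cE_{z_{2},k}$, the transfer of the resulting limit from $(u_{k})$ to $(x_{k})$ via the ergodic representation \eqref{defi:x}, and the final two-cluster-point uniqueness test are exactly the paper's three ingredients. The only cosmetic difference is that where you invoke Silverman--Toeplitz regularity on the scalars $\innp{u_{i}}{z_{2}-z_{1}}$, the paper re-derives the same weighted average through a telescoped recursion for $s_{k}R_{k}$ and closes it with Stolz--Ces\`{a}ro (note that $D_{k}+\theta(R_{k}-R_{k-1})=\innp{u_{k}}{v-z}+\mathrm{const}$, so the two averaging steps coincide).
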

\begin{proof}
Let $z$ and $v$ be two arbitrary weak sequential cluster point of $\left( x_{k} \right) _{k \geq 0}$. This means that there exists two subsequences $\left( x_{m_{k}} \right) _{k \geq 0}$ and $\left( x_{n_{k}} \right) _{k \geq 0}$ of $\left( x_{k} \right) _{k \geq 0}$, which converge weakly to $z$ and $v$, respectively, as $k \to +\infty$.

For every $k \geq 1$, we set
\begin{align}
D_{k} \coloneq \cE_{z,k} - \cE_{v,k} . \label{defi:D}
\end{align}
\Cref{prop:bound} tells us that $z$ and $v$ belong to $\argmin{\cH}{f}$. From \Cref{prop:E}, we conclude that $D_{\star}:=\lim_{k \to + \infty} D_{k}$ exists and is finite.

For every $k \geq 1$, we set
\begin{align}
R_{k} & \coloneq  h_{z,k} - h_{v,k} . \label{defi:R} 
\end{align}
According to \eqref{defi:h}, we have, for every $k \geq 1$,
\begin{align}
R_{k} = h_{z,k} - h_{v,k}
= \demi \left( \norm{x_{k} - z}^{2} - \norm{x_{k} - v}^{2} \right) = \innp{x_{k} - v}{v - z} + \demi \norm{v - z}^{2} . \label{eq:R:x}
\end{align}
From here, we deduce that, for every $k \geq 1$,
\begin{align}
R_{k+1} - R_{k}
& = \left( h_{z,k+1} - h_{v,k+1} \right) - \left( h_{z,k} - h_{v,k} \right) \nonumber \\
& = \left( \innp{x_{k+1} - v}{v - z} + \demi \norm{v - z}^{2} \right) - \left( \innp{x_{k} - v}{v - z} + \demi \norm{v - z}^{2} \right) \nonumber \\
& = \innp{x_{k+1} - x_{k}}{v - z} . \label{eq:dR}
\end{align}
By the Cauchy-Schwarz inequality, we infer
\begin{equation*}
0 \leq \abs{R_{k+1} - R_{k}} = \abs{\innp{x_{k+1} - x_{k}}{v - z}} \leq \norm{x_{k+1} - x_{k}} \norm{v - z} ,
\end{equation*}
hence, \eqref{bound:vel} yields
\begin{equation*}
\lim_{k \to + \infty} \left( R_{k+1} - R_{k} \right) = 0 .
\end{equation*}

Next, by using \eqref{eq:Eh}, we have, for every $k \geq 1$, that
\begin{align}
D_{k} = \cE_{z,k} - \cE_{v,k}
& = \left( t_{k} - 1 \right) \innp{x_{k} - x_{k-1}}{v - z} + \left( h_{z,k} - h_{v,k} \right) \nonumber \\
& = \left( t_{k} - 1 \right) \innp{x_{k} - x_{k-1}}{v - z} + R_{k}, \label{eq:D:E:R}
\end{align}
which, in the light of \eqref{eq:dR}, yields
\begin{align*}
t_{k+1} D_{k+1} 
& = \left( t_{k+1}^{2} - t_{k+1} \right) \innp{x_{k+1} - x_{k}}{v - z} + t_{k+1} R_{k+1} \nonumber \\
& = \left( t_{k+1}^{2} - t_{k+1} \right) \left( R_{k+1} - R_{k} \right) + t_{k+1} R_{k+1} \nonumber \\
& = t_{k+1}^{2} R_{k+1} - \left( t_{k+1}^{2} - t_{k+1} \right) R_{k} \nonumber \\
& = t_{k+1}^{2} R_{k+1} - \left( t_{k}^{2} + \theta \left( t_{k} - t_{k+1} \right) \right) R_{k} .
\end{align*}
Adding $\theta t_{k+1} \left( R_{k+1} - R_{k} \right)$ on both sides, leads, for every $k \geq 1$, to
\begin{equation*}
t_{k+1} \left[ D_{k+1} + \theta \left( R_{k+1} - R_{k} \right) \right] = \left( t_{k+1}^{2} + \theta t_{k+1} \right) R_{k+1} - \left( t_{k}^{2} + \theta t_{k} \right) R_{k}
\end{equation*}
or, equivalently,
\begin{equation*}
s_{k+1} R_{k+1} = s_{k} R_{k} + t_{k+1} \left[ D_{k+1} + \theta \left( R_{k+1} - R_{k} \right) \right] .
\end{equation*}
By the telescoping sum argument, we obtain, for every $k \geq 1$, that
\begin{align*}
s_{k+1} R_{k+1} 
& = s_{1} R_{1} + \sum_{i=1}^{k} t_{i+1} \left[ D_{i+1} + \theta \left( R_{i+1} - R_{i} \right) \right] \nonumber \\
& = s_{1} R_{1} + \sum_{i=2}^{k+1} t_{i} \left[ D_{i} + \theta \left( R_{i} - R_{i-1} \right) \right] \nonumber \\
& = - t_{1} \left[ D_{1} + \theta \left( R_{1} - R_{0} \right) \right] + s_{1} R_{1} + \sum_{i=1}^{k+1} t_{i} \left[ D_{i} + \theta \left( R_{i} - R_{i-1} \right) \right] .
\end{align*}
Since $s_{k+1} > 0$, the identity above can be equivalently written, for every $k \geq 1$, as
\begin{align*}
R_{k+1} 
& = \dfrac{1}{s_{k+1}} \left(\left[ \theta \left( R_{0} - R_{1} \right) - D_{1} \right] + s_{1} R_{1} + \sum_{i=1}^{k+1} t_{i} \left[ D_{i} + \theta \left( R_{i} - R_{i-1} \right) \right] \right) \nonumber \\
& = \dfrac{\theta \left( R_{0} - R_{1} \right) - D_{1}}{s_{k+1}} + \dfrac{s_{1} R_{1} + \sum_{i=1}^{k+1} t_{i} \left[ D_{i} + \theta \left( R_{i} - R_{i-1} \right) \right]}{s_{1} + \sum_{i=1}^{k+1} t_{i}} ,
\end{align*}
where, in the last term of the second equation, we used \eqref{lem:t_claim2}.
From \Cref{lem:t} \ref{lem:t_claim1}, we also know that $\lim_{k \to + \infty} t_{k} = + \infty$, so that 
\begin{equation*}
\lim_{k \to + \infty} \dfrac{\theta \left( R_{0} - R_{1} \right) - D_{1}}{s_{k+1}} = 0 .
\end{equation*}
Finally,  applying Stolz-Ces\`{a}ro Theorem, we obtain
\begin{align*}
\lim_{k \to + \infty} \dfrac{s_{1} R_{1} + \sum_{i=1}^{k+1} t_{i} \left[ D_{i} + \theta \left( R_{i} - R_{i-1} \right) \right]}{s_{1} + \sum_{i=1}^{k+1} t_{i}} 
& = \lim_{k \to + \infty} \dfrac{t_{k+2} \left[ D_{k+2} + \theta \left( R_{k+2} - R_{k+1} \right) \right]}{t_{k+2}} \nonumber \\
& = \lim_{k \to + \infty} \left[ D_{k+2} + \theta \left( R_{k+2} - R_{k+1} \right) \right] = D_{\star} \in \bR .
\end{align*}
From here, we conclude that
\begin{equation*}
\lim_{k \to + \infty} R_{k} = \lim_{k \to + \infty} D_{k} = D_{\star}.
\end{equation*}
Passing to the limit \eqref{eq:R:x} along the two subsequences $\left( x_{m_{k}} \right) _{k \geq 0}$ and $\left( x_{n_{k}} \right)_{k \geq 0}$, we obtain
\begin{equation*}
D_{\star} = - \demi \norm{v-z}^{2}
\text{ and }
D_{\star} = \demi \norm{v-z}^{2} ,
\end{equation*}
respectively. This implies $v = z$, and the proof is completed.
\end{proof}

\begin{remark}
\begin{enumerate}
\item Using that $\lim_{k \to + \infty} \alpha_{k} = 1$ and $\lim_{k \to + \infty} (x_{k} - x_{k-1}) = 0$, from Theorem \ref{maintheorem} we deduce that the sequence $\left( y_{k} \right) _{k \geq 1}$ also converges weakly to a minimizer of f. This means that the sequence generated by the Ravine method  \cite{GT,AF} also converges.

\item For the choice of $(t_k)_{k \geq 1}$ following the Chambolle–Dossal rule \eqref{cd}, it is known (see, for instance, \cite{CD:15, AP}) that when $\alpha > 3$ a \emph{little-o} rate of convergence can be achieved; that is,
\begin{equation*}
\func{f}{x_{k}} - \optval = \func{o}{\frac{1}{t_{k}^{2}}} = \func{o}{\frac{1}{k^{2}}} \text{ as } k \to + \infty .
\end{equation*}
The key ingredients for this improved convergence rate are the following summability properties
\begin{equation}
\label{little-o}
\sum_{k \geq 1} t_{k+1} \left( \func{f}{x_{k}} - \optval \right) < + \infty 
\quad \text{ and } \quad
\sum_{k \geq 1} t_{k} \norm{x_{k}-x_{k-1}}^{2} < + \infty .
\end{equation}
These conditions are also fundamental in establishing the weak convergence of the iterates. However, it remains doubtful whether such summability results continue to hold in the critical case $\alpha = 3$ or under the Nesterov rule \eqref{nes}.

Even in the simpler continuous-time setting considered by Ryu \cite{Ryu}, it is unclear whether analogous integrability results can be derived for \eqref{eq:AVD} when $\alpha =3$.

\item Shortly after we submitted a first version of this note, we realized that Theorem~\ref{maintheorem} remains valid if the sequence $(t_k)_{k \geq 1}$ satisfies the weaker condition \eqref{cond:t}, that is,
\[
t_1=1 \quad \mbox{and} \quad t_{k+1}^2 - t_k^2 \leq t_{k+1} \quad \forall k \geq 1 .
\]
This turns out to hold also for both the Nesterov rule \eqref{nes} and the Chambolle-Dossal rule \eqref{cd} with $\alpha=3$. Taking into account, for instance, \cite[Theorem~2]{CD:15}, we obtain that all statements of Proposition~\ref{prop:E} remain true. Moreover, the sequence $(z_k)_{k \geq 1}$ is bounded. Since, for every $k \geq 1$,
\begin{equation*}
x_{k} = \left( 1 - \dfrac{1}{t_{k}} \right) x_{k-1} + \dfrac{1}{t_{k}} z_{k},
\end{equation*}
it follows by induction (see the independent work of \cite[Part~III]{Ryu}, \cite{JR:25}, assisted by ChatGPT, posted after our initial submission) that
\begin{equation*}
\norm{x_{k}} \leq  \max \left\lbrace \norm{x_{0}} , \sup_{k \geq 1} \norm{z_{k}} \right\rbrace.
\end{equation*}
In other words, the sequence $(x_k)_{k \geq 1}$ is also bounded. From this point, one can proceed with the same arguments as in the proof of Theorem~\ref{maintheorem}. Note that boundedness has been already established by \cite[Proposition~4.3]{ACPR} for $\alpha=3$ in the Chambolle-Dossal rule \eqref{cd}. 

\item In the proof of Theorem~\ref{maintheorem}, from \eqref{eq:D:E:R} we have
\[
D_{k} = R_{k} + (t_k - 1)(R_k - R_{k-1}) \quad \forall k \geq 1.
\]
Since $\sum_{k \geq 1} \frac{1}{t_{k}} = + \infty$ and $\lim_{k \to +\infty} D_k = D_\star$, one could have alternatively invoked \cite[Lemma A.4]{BCCH} to conclude directly that $\lim_{k \to +\infty} R_k = D_\star$.

\end{enumerate}
\end{remark}

\subsection{Accelerated proximal-gradient algorithm}

The extension of Algorithm~\ref{algo:FISTA} to the non-smooth composite optimization problem
\begin{equation*}
\min\limits_{x \in \cH} \left\lbrace \func{f}{x} + \func{g}{x} \right\rbrace ,
\end{equation*}
where $f: \cH \to \bR$ is convex and $L_{\nabla f}$-smooth, and $g \colon \cH \to \bR \cup \left\lbrace + \infty \right\rbrace$ is proper, convex and lower semicontinuous, with nonempty set of minimizers, is the fast proximal-gradient algorithm (see, e.g., \cite{BT:09}). For $\left( \alpha_{k} \right) _{k \geq 1} \subseteq \bR$, $0 < \lambda \leq \frac{1}{L_{\nabla f}}$, and $x_{0}, x_{1} \in \cH$, the iterates are defined, for every $k \geq 1$, by
\begin{equation*}
\begin{dcases}
y_{k} 	& \coloneq x_{k} + \alpha_{k} \left( x_{k} - x_{k-1} \right) , \\
x_{k+1}	& \coloneq \prox{\lambda g}{y_{k} - \lambda \nabla \func{f}{y_{k}}},
\end{dcases}
\end{equation*}
where the proximal point operator associated to $g$ with parameter $\lambda > 0$ is given by
\begin{equation*}
\mathsf{prox}_{\lambda g} \colon \cH \to \cH, \quad \prox{\lambda g}{y} \coloneq \argmin{x \in \cH}{\left\lbrace \func{g}{x} + \dfrac{1}{2 \lambda} \norm{x-y}^{2} \right\rbrace}.
\end{equation*}
If $\left( t_{k} \right)_{k \geq 1}$ is a sequence defined according to \eqref{cond:t}, and therefore also according to \eqref{defi:t}, and
\begin{equation*}
\alpha_{k} \coloneq \dfrac{t_{k}-1}{t_{k+1}} \quad \forall k \geq 1,
\end{equation*}
then, by defining the energy functions in \eqref{defi:W}–\eqref{defi:E} accordingly and by using, for instance, \cite[Theorem 2]{CD:15}, one can analogously show that the generated sequence $(x_k)_{k \geq 0}$ converges weakly to a minimizer of $f+g$.

This result provides a positive answer to the long-standing open question regarding the convergence of the iterates generated by FISTA, namely, for sequences $\left( t_{k} \right)_{k \geq 1}$ following either the Nesterov rule \eqref{nes} as in \cite{BT:09} as well as  the Chambolle-Dossal rule as in \cite{CD:15} with $\alpha=3$.

\section*{Acknowledgement}
We would like to thank Tony Silveti-Falls who draw our attention to the post \cite{Ryu}.

%%%%

\end{document}